\theoremstyle{plain}
\newtheorem{theorem}{Theorem}[section]
\newtheorem{corollary}[theorem]{Corollary}
\newtheorem{lemma}[theorem]{Lemma}
\theoremstyle{definition}
 \theoremstyle{remark}
\newtheorem{remark}{Remark}[section]
\newtheorem{example}{Example}[section]
\numberwithin{equation}{section}
\numberwithin{table}{section}
\numberwithin{figure}{section}
\begin{document}

\title[Notes on Hamiltonian threshold and chain graphs]{Notes on Hamiltonian threshold and chain graphs}

\author{Milica An\dj eli\'c}
\address{Department of Mathematics, Kuwait University, Safat 13060,
Kuwait} \email{milica.andelic@ku.edu.kw}

\author{Tamara Koledin}
\address{Faculty of Electrical Engineering, University of Belgrade, Bulevar kralja Aleksandra 73,
	Belgrade, Serbia} \email{tamara@etf.rs}

\author{Zoran Stani\' c}
\address{Faculty of Mathematics, University of Belgrade, Studentski trg 16,
	Belgrade, Serbia} \email{zstanic@math.rs}

\subjclass[2000]{05C45}

\date{\today}

\keywords{threshold graph; chain graph; Hamiltonian graph}

\begin{abstract}
We revisit results obtained in [F. Harary, U. Peled, Hamiltonian threshold graphs, Discrete Appl.~Math., 16 (1987), 11--15], where several necessary and necessary and sufficient conditions for a connected threshold graph to be Hamiltonian were obtained.  We present these results in new forms, now stated in terms of structural parameters that uniquely define the  threshold graph and we extend them to chain graphs.  We also identify the chain graph with minimum number of Hamilton cycles within the class of Hamiltonian chain graphs of a given order. 
\end{abstract}

\maketitle \medskip\noindent

\section{Introduction}

\noindent  A threshold graph can be defined in many ways, as can be seen in \cite{MP95}.
Here we follow the definition  via binary generating sequences.  Accordingly, a \textit{threshold graph} $G(b)$ is obtained from its \textit{binary generating sequence} of the form  $b=(b_1 b_2 \cdots b_n)$ in the following way:

\begin{itemize}
	\item[(i)] for $i=1$, $G_1 = G(b_1) = K_1$, i.e., a single vertex;
	\item[(ii)] for $i\geq 2$, with $G_{i-1} = G(b_1 b_2 \cdots b_{i-1})$ already constructed,
	$G_i = G(b_1 b_2 \cdots b_{i-1} b_{i})$ is formed by adding an isolated vertex to $G_{i-1}$ if $b_i = 0$ (that is, a vertex non-adjacent to any vertex in $G_{i-1}$) or by adding a dominating vertex to $G_{i-1}$ if $b_i=1$ (that is, a vertex adjacent to all the vertices in $G_{i-1}$).
\end{itemize}

Clearly, $G(b) = G_{n}$. A schematic representation of a threshold graph is illustrated in Fig~\ref{fig:nested}(a); its vertices are partitioned into cells $U_i, V_i~(1\leq i\leq h)$.

\begin{figure}[h]
	\centering
	\includegraphics[width=140mm,angle=0]{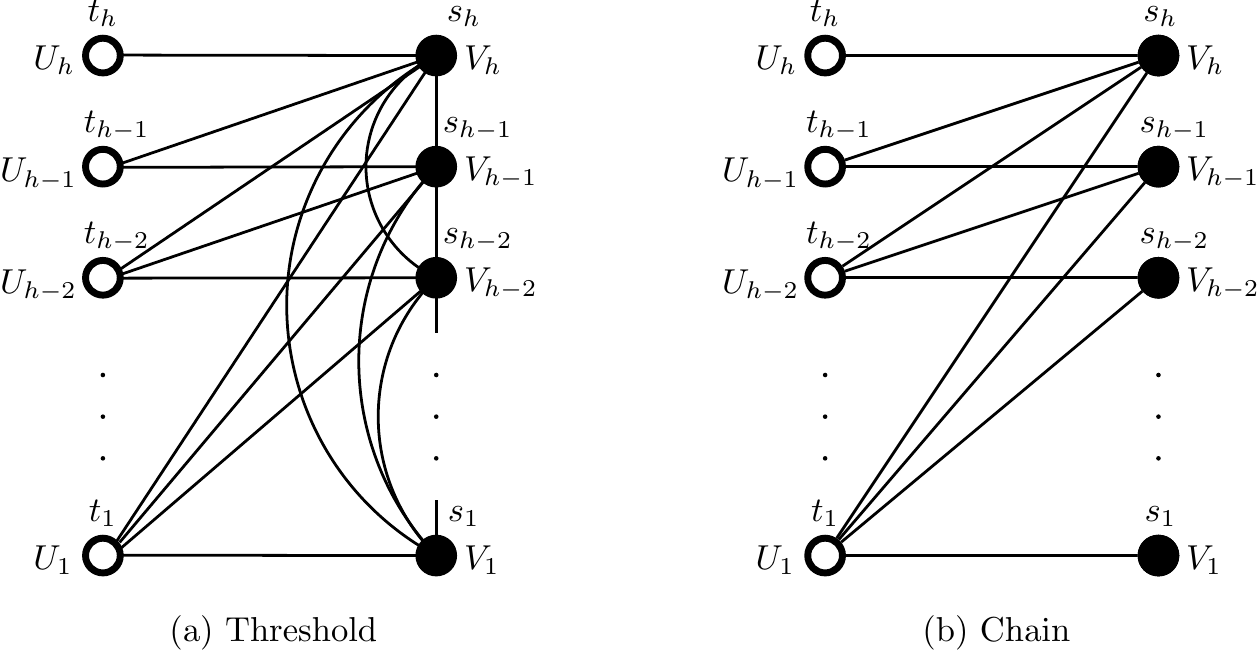}
	\caption{A schematic representation of a threshold and a chain graph.}\label{fig:nested}
\end{figure}

The vertices in $U=\bigcup_{i=1}^h U_i$ induce a co-clique, while the vertices in 
 $V=\bigcup_{i=1}^h V_i$ induce a clique. An equivalent definition of a threshold graph says that a graph is a threshold graph if it does not contain neither of $2K_2$ (the disjoint union of two edges) $P_4$ (the 4-vertex path) or $C_4$ (the 4-vertex cycle) as an induced subgraph \cite{StaM}.
 
 A bipartite counterpart to the threshold graph is a \textit{chain graph} which is generated by the same binary sequence in the following way:
\begin{itemize}
\item[(i)] for $i=1$, $G_1=G(b_1)=K_1$, i.e., a single  vertex belonging to one colour class, say white vertex;
\item[(ii)] for $i\geq 2$, with $G_{i-1}=G(b_1 b_2 \cdots b_{i-1})$, $G_i=G(b_1 b_2 \cdots b_{i-1} b_i)$ is obtained by adding to $G_{i-1}$ an isolated white vertex if $b_{i}=0$ or a black vertex which dominates all previously added white vertices if $b_i=1$.
\end{itemize}

A schematic representation is illustrated in Figure~\ref{fig:nested}(b). 

Here are some conventions on binary generating sequences. First, observing that, due to the defining rule~(i),  (either a threshold or a chain) graph is independent of $b_1$, we use the convention that the binary sequences always start with zero. Moreover, if $b_n = 1$, then the corresponding graph is connected; otherwise, it is connected up to isolated vertices. Accordingly, since we restricted ourselves to connected graphs, a binary sequence can be written as \begin{equation}\label{eq:bsequence}
 b=(0^{t_1}1^{s_1})(0^{t_2}1^{s_2})\cdots (0^{t_h}1^{s_h}),~~\text{where}~~ t_i, s_i>0,~~ \text{for}~~ 1\leq i\leq h.\end{equation}
 (Naturally, $t_i$'s and $s_i$'s are  lengths of maximum runs of consecutive zeros and ones, respectively.)

In a so-called \textit{split graph} the vertex set can be divided into two disjunct sets, say $U$ and~$V$, in such a way that $U$ induces a co-clique and $V$ induces a clique. Evidently, every threshold graph is a split graph and the neighbourhoods of the vertices are totally ordered by inclusion. By deleting all the edges that belong to the clique $V=\bigcup_{i=1}^h V_i$ (see Figure \ref{fig:nested}) of a threshold graph, we obtain the chain graph that is generated by the same binary sequence. {Note that for any $x\in U_1$, the subgraph induced by $V\cup \{x\}$  induces a maximal clique in the corresponding threshold graph.}

We say that a graph is \textit{Hamiltonian} if it contains a cycle passing through all of its vertices. 
Every such a cycle is called a \textit{Hamilton cycle}. 

In this paper we revisit the results obtained in \cite{HP87} on Hamiltonicity of threshold  graphs. We give necessary and sufficient conditions for a threshold graph to be Hamiltonian in terms of its  generating binary sequence.

The paper is organized as follows. In Section~\ref{non} we recall the results from \cite{HP87}. In Section~\ref{rev} we interpret these results in terms of the entries of the generating binary sequence of a threshold graph and give a criterion  for  Hamiltonicity of a threshold graph that can be deduced directly from its binary sequence. This criterion is implemented in the algorithm presented in Section~\ref{alg}, where we also include an algorithm that determines whether a given chain graph is Hamiltonian. In Section \ref{min} we  identify  the chain graph of a given  order that contains minimum number of Hamilton cycles. Some concluding notes and directions for future research are given in Section~\ref{con}.

\section{Results obtained in \cite{HP87}}\label{non}

\noindent Let $G$ be a threshold graph with vertex set $I\cup J$, where $I$ (with $|I|=r$) induces a co-clique and $J$ (with $|J|=s$) induces a maximal clique. Let further $B$ denote the chain graph obtained from $G$ by deleting all edges in the subgraph induced by $J$. For $B$, let $d_1\leq d_2\leq\cdots\leq d_r$ and $e_1\leq e_2\leq\cdots\leq e_s$ denote the degrees of the vertices $x_1, x_2, \ldots, x_r\in I$ and $y_1, y_2, \ldots, y_s\in J$, respectively. 

In order to determine whether a given threshold graph is Hamiltonian, the authors of~\cite{HP87} first showed that  this problem can be reduced to the question of Hamiltonicity of the corresponding chain graph $B$ with the colour classes of the same size, as shown in the sequel.

The first lemma gives sufficient conditions for a split graph to be non-Hamiltonian.

\begin{lemma}(\cite{HP87})\label{lem1}
Let $G$ be a split graph with vertex set $I\cup J$, where $I$ induces a co-clique of  size $r$ and $J$ induces a maximal clique of  size $s$. If either $r>s$ or $r<s$ with $e_{s-r}=0$, then  $G$ is not Hamiltonian.
\end{lemma}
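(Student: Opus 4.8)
The plan is to argue by contradiction using a degree/edge count on a hypothetical Hamilton cycle $H$, exploiting that $I$ is independent. In $H$ every vertex has degree exactly $2$, and since no two vertices of $I$ are adjacent, for each $x\in I$ both edges of $H$ at $x$ join $x$ to a vertex of $J$. Hence $H$ uses exactly $2r$ edges between $I$ and $J$. The degree sum of the vertices of $J$ inside $H$ equals $2s$; as each $I$--$J$ edge contributes $1$ to it and each $J$--$J$ edge contributes $2$, I get $2s=2r+2k$, where $k$ is the number of $J$--$J$ edges of $H$, so $k=s-r$. If $r>s$ this gives $k<0$, which is absurd; hence no Hamilton cycle exists and the case $r>s$ is done.

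For the case $r<s$ with $e_{s-r}=0$, the increasing order of degrees forces $e_1=\cdots=e_{s-r}=0$, so the vertices $y_1,\dots,y_{s-r}$ have no neighbour in $I$. Let $J_0\subseteq J$ collect all vertices of $J$ with no neighbour in $I$, so that $|J_0|\ge s-r$. Every $y\in J_0$ has all its $G$-neighbours in $J$, so both edges of $H$ at $y$ are $J$--$J$ edges; thus the vertices of $J_0$ account for $2|J_0|\ge 2(s-r)$ endpoints of $J$--$J$ edges. But $H$ has only $k=s-r$ such edges, i.e.\ $2(s-r)$ endpoints in all, so the count is tight: $|J_0|=s-r$ and every $J$--$J$ edge of $H$ has both ends in $J_0$. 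Therefore in $H$ both neighbours of every vertex of $J_0$ lie in $J_0$, and walking along the single cycle $H$ never leaves $J_0$; hence $J_0=\emptyset$ or $J_0=V(G)$. Assuming $I\ne\emptyset$ (so $r\ge 1$), we have $\emptyset\ne J_0\subseteq J\subsetneq V(G)$, a contradiction, so $G$ is non-Hamiltonian here as well.

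The edge count yielding $k=s-r$ is routine; the crux lies in the second case. The subtlety I expect to be the main obstacle is recognising that the $s-r$ vertices of $J$ with no neighbour in $I$ must be stitched into $H$ using internal $J$-edges only, and that the exact value $k=s-r$ leaves no slack: the inequality $2|J_0|\ge 2(s-r)$ matches the total endpoint budget precisely, which is what forces these vertices into a forbidden proper subcycle rather than yielding a merely numerical contradiction.
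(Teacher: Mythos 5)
Your proof is correct. Note first that the paper itself states this lemma without proof, importing it from \cite{HP87}, so there is no in-paper argument to match; your write-up is a valid self-contained proof. The key count is right: since $I$ is independent, a Hamilton cycle $H$ contains exactly $2r$ edges between $I$ and $J$, and the degree sum over $J$ gives exactly $k=s-r$ edges of $H$ inside $J$, killing the case $r>s$ immediately (this case also follows in one line from the standard necessary condition $c(G-S)\le |S|$ with $S=J$, which is the route one would expect in \cite{HP87}; your edge count is an equivalent substitute). In the second case your tightness argument is sound: the $\ge s-r$ vertices of $J$ with no neighbour in $I$ each consume two endpoints of $J$--$J$ edges of $H$, the budget is exactly $2(s-r)$, so $J_0$ is closed under $H$-adjacency and the single spanning cycle would have to stay inside $J_0\subsetneq V(G)$, a contradiction (equivalently: the $H$-edges inside $J_0$ form a $2$-regular graph on $J_0$, i.e.\ a proper subcycle of $H$). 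Two small points in your favour: you correctly flag the implicit hypothesis $r\ge 1$ --- it is genuinely needed, since for $r=0$ the condition $e_{s}=0$ holds vacuously while $K_s$ is Hamiltonian for $s\ge 3$, and the paper indeed restricts to $r\ge 2$ right after the lemma; and your argument never uses maximality of the clique $J$, so you in fact prove the statement for an arbitrary split partition --- maximality is only relevant to how the paper later applies the lemma to threshold graphs.
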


In what follows we only consider threshold graphs with $r\geq 2$, since for $r=0$, $G$ is Hamiltonian if and only if $s\geq 3$, while for $r=1$, $G$ is Hamiltonian if and only if $d_1\geq 2$.

Since any threshold graph is a split graph, by the previous lemma,  Hamiltonian threshold graphs satisfy  $2\leq r\leq s$ and $e_{s-r}>0$. The next lemma shows that the problem under the consideration can be reduced to the Hamiltonicity of threshold graphs with  {$|U|=|V|$.} 

\begin{lemma}(\cite{HP87})\label{lem2}
If  $2\leq r<s$ and $e_{s-r}>0$, then the threshold graph $G$ is  Hamiltonian if and only if the threshold subgraph $G^*$ obtained by deleting the vertices $y_1, y_2, \ldots, y_{s-r}$ is Hamiltonian.
\end{lemma}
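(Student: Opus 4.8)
The plan is to prove both implications, and the key simplifying observation is that $G^*$ is \emph{balanced}: its co-clique $I$ has $r$ vertices and its clique $J^*=\{y_{s-r+1},\ldots,y_s\}$ has $r$ vertices. Since $I$ is independent and $|I|=|J^*|=r$, the $r$ vertices of $I$ form a maximum independent set in any Hamilton cycle of $G^*$, forcing that cycle to \emph{alternate} between $I$ and $J^*$. Thus a Hamilton cycle of $G^*$ is exactly a cyclic arrangement $x_{\sigma(1)},z_1,x_{\sigma(2)},z_2,\ldots,x_{\sigma(r)},z_r$ in which the $z_i$ run over $J^*$ and each $z_i$ is adjacent in $B$ to both flanking vertices of $I$. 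For the easy direction ($G^*$ Hamiltonian $\Rightarrow$ $G$ Hamiltonian) I would start from such an alternating cycle $C^*$. As $e_{s-r}>0$, the vertex $y_{s-r}$ has a neighbour $x\in I$ in $B$; in $C^*$ this $x$ is flanked by two clique vertices, one of which, say $y\in J^*$, is joined to $x$ by a cycle edge. Replacing the single edge $\{y,x\}$ by the path $y,y_1,y_2,\ldots,y_{s-r},x$ reinserts all deleted vertices, since the steps $y\sim y_1$ and $y_j\sim y_{j+1}$ are clique edges of $J$ while $y_{s-r}\sim x$ holds by the choice of $x$; this produces a Hamilton cycle of $G$.

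The reverse implication ($G$ Hamiltonian $\Rightarrow$ $G^*$ Hamiltonian) is the substantive part. I would take a Hamilton cycle $C$ of $G$; because $I$ is independent, $C$ determines a cyclic order $x_{(1)},\ldots,x_{(r)}$ of $I$ separated by nonempty arcs $A_1,\ldots,A_r$ of clique vertices, where the two end-vertices $u_i,w_i$ of $A_i$ satisfy $x_{(i)}\in N(u_i)$ and $x_{(i+1)}\in N(w_i)$ in $B$. I would keep this cyclic order of $I$ fixed and try to re-fill the $r$ gaps using the vertices of $J^*$. Concretely, form the bipartite graph on the gaps versus $J^*$ in which gap $i$ is joined to $y_k\in J^*$ precisely when $\{x_{(i)},x_{(i+1)}\}\subseteq N(y_k)$; a perfect matching in this graph yields the desired alternating Hamilton cycle of $G^*$, so everything reduces to verifying Hall's condition.

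Here the total ordering of the neighbourhoods by inclusion does the work. Writing $m_i$ for the larger of the two indices of $u_i$ and $w_i$, the inclusions $N(u_i),N(w_i)\subseteq N(y_{m_i})$ give $\{x_{(i)},x_{(i+1)}\}\subseteq N(y_{m_i})$, so gap $i$ is servable by $y_{m_i}$; since $m_i\le s$ it is in particular servable by $y_s$, so no gap has empty admissible set. Because for each gap the admissible vertices form an up-set in the chain $N(y_{s-r+1})\subseteq\cdots\subseteq N(y_s)$, Hall's condition collapses to the single family of bounds $|\{i:m_i\ge k_0\}|\le s-k_0+1$ for every $k_0\in\{s-r+1,\ldots,s\}$. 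This is now a counting statement: if $m_i\ge k_0$ then some end of $A_i$ is a clique vertex of index $\ge k_0$, the arcs $A_i$ are pairwise vertex-disjoint, and there are only $s-k_0+1$ clique vertices of index $\ge k_0$ in total, whence the bound.

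I expect the main obstacle to be precisely this reverse direction, and within it the correct reduction of Hall's condition to an up-set count; once the nestedness is used to pass from the two arc-ends $u_i,w_i$ to the single index $m_i$, the counting bound is immediate from disjointness of the arcs. The two routine points to confirm are that the alternating structure of the cycles is genuinely forced (an independence-number argument on $C_{2r}$) and that the roles of $e_{s-r}>0$ are exactly as used above, namely to guarantee the reinsertion vertex $x$ in the easy direction while, by Lemma~\ref{lem1}, the hypothesis $e_{s-r}>0$ is automatic whenever $G$ is assumed Hamiltonian.
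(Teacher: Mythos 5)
Your proof is correct, but a direct comparison is not quite possible: the paper states this lemma without proof, quoting it from \cite{HP87} (and adds only a Remark correcting a false deduction there, namely that after deleting $y_1,\ldots,y_{s-r}$ the clique $\{y_{s-r+1},\ldots,y_s\}$ would be a maximal clique of the same size as the co-clique). So your argument is necessarily an independent route, and it is a sound one. Your opening observation --- that $|I|=|J^*|=r$ forces any Hamilton cycle of $G^*$ to alternate between $I$ and $J^*$, since an independent set of size $r$ on a cycle of length $2r$ must occupy alternate positions --- is exactly the fact the paper uses informally right after the lemma (``the edges in the clique cannot be used in any Hamiltonian cycle''). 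The easy direction, splicing the path $y,y_1,\ldots,y_{s-r},x$ in place of the cycle edge $yx$, is clean: the interior steps are clique edges and $y_{s-r}\sim x$ is exactly what $e_{s-r}>0$ supplies. The substantive direction via Hall's theorem also checks out: the admissible sets are genuinely up-sets because in $B$ the ordering $e_1\le\cdots\le e_s$ is consistent with inclusion of neighbourhoods (nested sets of equal size coincide, so ties are harmless), and the bound $|\{i:m_i\ge k_0\}|\le s-k_0+1$ follows since distinct gaps contribute distinct arc-ends of index at least $k_0$. One cosmetic point: the admissible threshold of gap $i$ within $J^*$ is really $\max(m_i,\,s-r+1)$ rather than $m_i$; for $k_0=s-r+1$ the corresponding Hall inequality reads $r\le r$ and is automatic, while for $k_0>s-r+1$ it coincides with the inequality you verify, so your family of bounds does establish Hall's condition in full. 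A pleasant feature of your route is that it never needs $J^*$ to be a maximal clique, so it entirely sidesteps the flawed step of \cite{HP87} that the paper's Remark flags, and the matching formulation makes the role of the nested neighbourhoods explicit.
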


\begin{remark}

In \cite{HP87} the  conclusion that after dropping $y_1, y_2, \ldots, y_{s-r}$ from $J$ the resulting threshold graph will have a maximal clique and co-clique of the same size is wrong. The correct conclusion is that in this case $s=r+2$. In the resulting graph a clique induced by $\{y_{s-r+1},\ldots, y_s\}$ is not a maximal one. A maximal one can be obtained by adding a vertex from a co-clique of the smallest degree. For example, in the threshold graph generated by  $0^21^20^21^3$, the size of a maximal clique is $6$ and the size of a co-clique is $3$. After dropping $y_1, y_2, y_3$
we obtain the threshold graph generated by $0^3 1^3$. However, in this graph $r\neq s$, since $r=2$ and $s=4$.

\end{remark}

In what follows we assume that in a threshold graph $G$, $|U|=|V|$. Then the edges in the clique cannot be used in any Hamiltonian cycle, and therefore can be dropped from $G$, yielding the chain graph $B$ with $|U|=|V|\geq 2$.

For $q=0,1,\ldots,|U|-1$ denote by $S_q$  the set of inequalities 
\begin{align*}
d_j&\geq j+1, \quad j=1,2,\ldots, q;\\
e_j&\geq j+1, \quad j=1,2,\ldots,|U|-1- q.
\end{align*}

The next theorem gives two equivalent conditions for a chain graph $B$ with $|U|=|V|\geq 2$ to be Hamiltonian.

\begin{theorem}(\cite{HP87})\label{th_Sq}
If $|U|=|V|\geq 2$, then the following conditions are equivalent:
\begin {itemize}
\item[(a)] $B$ is Hamiltonian;
\item[(b)] $S_q$ holds for some $q\in\{0,1,\ldots,r-1\}$; 
\item[(c)] $S_q$ holds for each $q\in\{0,1,\ldots,r-1\}$. 
\end{itemize}
\end{theorem}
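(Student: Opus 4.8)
The plan is to close the loop of implications $(a)\Rightarrow(b)\Rightarrow(c)\Rightarrow(a)$; since three statements joined in a cycle are pairwise equivalent, this proves the theorem. Throughout write $r=|U|=|V|\ge 2$, and order $d_1\le\cdots\le d_r$ and $e_1\le\cdots\le e_r$. I would first record two structural facts coming from the nestedness of neighbourhoods in the chain graph $B$. Because the neighbourhoods of the $U$-vertices are totally ordered by inclusion, each $y_j$ is joined exactly to a top segment $\{x_i:i\ge r-e_j+1\}$ of $U$; reading this from either side gives the adjacency criterion $x_i\sim y_j \iff d_i\ge r-j+1 \iff e_j\ge r-i+1$, and counting the neighbours of $y_j$ gives the conjugacy relation $e_j=|\{i:d_i\ge r-j+1\}|$ (and symmetrically $d_i=|\{j:e_j\ge r-i+1\}|$). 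These are one-line consequences of the definition of a chain graph, but they drive the whole argument.

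For $(b)\Rightarrow(c)$ I would show that a single valid $S_q$ propagates to its neighbours $S_{q+1}$ and $S_{q-1}$; iterating up and down from the one $q$ furnished by $(b)$ then yields every $S_q$, which is $(c)$. Assume $S_q$. Its inequalities $e_j\ge j+1$ for $j\le r-1-q$ already subsume those required for $S_{q+1}$, so only the extra bound $d_{q+1}\ge q+2$ has to be produced; this is exactly where the conjugacy enters, since the boundary inequality $e_{r-1-q}\ge r-q$ reads $|\{i:d_i\ge q+2\}|\ge r-q$, and as the $d_i$ are non-decreasing this forces $d_{q+1}\ge q+2$. The passage $S_q\Rightarrow S_{q-1}$ is the mirror image: the bound $d_q\ge q+1$ translates, again by conjugacy, into $e_{r-q}\ge r-q+1$, the single inequality missing from $S_{q-1}$. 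I expect this dual index bookkeeping to be the main obstacle, because the whole propagation hinges on getting the conjugacy off-by-ones exactly right; once the adjacency criterion is in hand, however, it is a short computation.

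For $(c)\Rightarrow(a)$ I would exploit that $(c)$ in particular contains $S_{r-1}$, i.e.\ $d_i\ge i+1$ for $1\le i\le r-1$, whence $d_{r-1}=d_r=r$. By the adjacency criterion this gives $x_i\sim y_j$ whenever $i+j\ge r$, so $B$ contains, as a spanning subgraph, the half graph $H$ defined by $x_i\sim y_j\iff i+j\ge r$. It then suffices to exhibit a single Hamilton cycle of $H$, for which I would take
\[
x_1\,y_{r-1}\,x_2\,y_{r-2}\,\cdots\,x_{r-1}\,y_1\,x_r\,y_r\,x_1 .
\]
Each consecutive pair $(i,j)$ satisfies $i+j\ge r$ (the three edge types give the sums $r$, $r+1$ and $2r$), so the cycle lies in $H\subseteq B$ and visits every vertex exactly once; hence $B$ is Hamiltonian.

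Finally, for $(a)\Rightarrow(b)$ I would establish the single instance $S_0$ (an admissible choice of $q$) by a toughness argument, using that every Hamiltonian graph is $1$-tough. Suppose $S_0$ fails, say $e_j\le j$ for some $j\le r-1$. By nestedness all of $y_1,\dots,y_j$ have their neighbours inside $W=N(y_j)$, where $|W|=e_j\le j$. Deleting $W$ isolates $y_1,\dots,y_j$ and leaves the non-empty set $V(B)\setminus(W\cup\{y_1,\dots,y_j\})$, of size at least $2(r-j)\ge 2$, as at least one further component; thus $B-W$ has at least $j+1>|W|$ components, contradicting $1$-toughness. Hence $S_0$ holds, which is an instance of $(b)$. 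Combining the four steps closes the loop and gives the stated equivalence.
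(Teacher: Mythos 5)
Your proposal is correct, and it is necessarily a different route from the paper's, because the paper states Theorem~\ref{th_Sq} without any proof at all --- it is imported verbatim from \cite{HP87}. So what you have produced is an independent, self-contained proof, and every step of it survives scrutiny. The two structural facts you start from (each $y_j$ sees exactly the top segment $\{x_i : i\ge r-e_j+1\}$ of $U$, and the conjugacy $e_j=|\{i: d_i\ge r-j+1\}|$) are exactly the Ferrers-diagram description of a chain graph with both classes sorted by degree; the only polish I would ask for is an explicit remark that same-degree vertices in a chain graph have identical neighbourhoods, so the two ``top segment'' descriptions hold \emph{simultaneously} under one pair of orderings --- your entire index bookkeeping rests on this. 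The bookkeeping itself is right: in $(b)\Rightarrow(c)$, $e_{r-1-q}\ge r-q$ translates to $|\{i : d_i\ge q+2\}|\ge r-q$, and monotonicity of the $d_i$ forces $d_{q+1}\ge q+2$, with the mirror step $d_q\ge q+1\Rightarrow e_{r-q}\ge r-q+1$ supplying the one inequality missing from $S_{q-1}$; the ranges of $q$ for which each step is invoked are exactly those where the boundary inequality exists. In $(c)\Rightarrow(a)$ your cycle $x_1y_{r-1}x_2y_{r-2}\cdots x_{r-1}y_1x_ry_rx_1$ does lie in the half graph (edge sums $r$, $r+1$, $2r$), and it is worth noting that this half graph $H$ (with $x_i\sim y_j$ iff $i+j\ge r$, degree multiset $2,3,\ldots,r-1,r,r$ on each side) is precisely the extremal graph $G_n$ of Section~\ref{min}, so your construction incidentally settles the Hamiltonicity of $G_n$ that the paper leaves there as ``an easy exercise''. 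In $(a)\Rightarrow(b)$ the toughness count is correct ($j$ singleton components $y_1,\ldots,y_j$ plus at least one more, against $|W|=e_j\le j$), with one degenerate case to mention in passing: if $e_j=0$ then $W=\emptyset$ and $1$-toughness says nothing, but then $y_j$ is isolated and $B$ is disconnected, so non-Hamiltonicity is immediate. What your route buys, compared with simply citing \cite{HP87}, is a proof from first principles (nested neighbourhoods, Chv\'atal's $1$-toughness necessity, one explicit cycle) that also localizes where each hypothesis is used: the $e$-inequalities are the toughness obstruction, the $d$-inequalities are what guarantee the spanning half graph.
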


To conclude, in order to determine whether an arbitrary  threshold graph  is Hamiltonian, all  three results reported in Lemma \ref{lem1}, Lemma \ref{lem2} and Theorem \ref{th_Sq} should be employed.

\section{New versions of results obtained in \cite{HP87}}\label{rev}

\noindent In this section we restate the results from Section~\ref{non} in terms of the entries of the generating binary sequence of a given threshold graph. Afterwards, we amalgamate them to obtain  a  result that gives necessary and sufficient conditions for a threshold graph  to be Hamiltonian.

Let $G$ and  $B$ be a threshold graph and a chain graph generated by a binary sequence~\eqref{eq:bsequence}. If $t_1\neq 1$, then the degrees of vertices in $B$, corresponding to {a co-clique $I$ and a maximal clique $J=V\cup\{x\}$ for some $x\in U_1$ are:}
\begin{eqnarray}\label{degU}
&d_1^{t_h},d_2^{t_{h-1}}, \ldots, d_h^{t_1-1}, ~ \text{with}~~ d_j=\sum_{i=h+1-j}^{h} s_i\\
&0^1,e_1^{s_1},e_2^{s_2}, \ldots, e_h^{s_h}, ~ \text{with}~~ {e_j=\sum_{i=1}^{j} t_i-1}.\label{degV}
\end{eqnarray}

Note that the vertex degrees are given in non-decreasing order, and according  to Figure~\ref{fig:nested}, they are the degrees of vertices in { $U_h, U_{h-1}, \ldots, U_1\setminus\{x\}$ and $\{x\}, V_1, V_2, \ldots, V_h$}, respectively.

Otherwise, if $t_1=1$, then  $G$ is a split graph in which the subgraph  induced by $V\cup U_1$ gives a maximal clique. Then the degrees of vertices in $B$ corresponding to the colour classes $\bigcup_{i=2}^h U_i$ and $V\cup U_1$ are:
\begin{eqnarray}\label{degU1}
&d_1^{t_h},d_2^{t_{h-1}}, \ldots, d_{h-1}^{t_2}, ~\text{with}~~ d_j=\sum_{i=h+1-j}^{h} s_i,\\
&0^{1+s_1}, e_2^{s_2},\ldots, e_{h}^{s_h}, ~\text{with}~~ e_j=\sum_{i=2}^{j} t_i.\label{degV1}
\end{eqnarray}

Let $T=\sum_{i=1}^h t_i$ and $S=\sum_{i=1}^h s_i$. From the previous observations, it follows that the size of the maximal clique $s$  and the size of the corresponding co-clique $r$  satisfy
$
r=
T-1$  and
$s=
S+1.$

We first state the following lemma that determines when $e_{s-r}=0$ may occur.

\begin{lemma}\label{zero}
Let $G$ be a threshold graph generated by \eqref{eq:bsequence}, such that $s>r$. Then $e_{s-r}=0$ holds if and only if  {$t_1\neq 1$ and $s-r=1$ or} $t_1=1$ and $s-r\in\{1, \ldots, s_1+1\}$. \end{lemma}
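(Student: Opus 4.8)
The plan is to analyze directly when the quantity $e_{s-r}=0$, using the explicit degree formulas \eqref{degV} and \eqref{degV1} established just before the lemma. The key observation is that the degrees $e_j$ associated with the clique side are listed in nondecreasing order starting with some number of zero entries, so $e_{s-r}=0$ holds precisely when the index $s-r$ falls within the initial block of zeros. Thus the whole proof reduces to counting how many leading zeros appear in the degree sequence of the clique side and comparing that count with $s-r$, treating the two cases $t_1\neq 1$ and $t_1=1$ separately, exactly as the degree formulas do.

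First I would record that $s-r=(S+1)-(T-1)=S-T+2$, which is positive by the hypothesis $s>r$. In the case $t_1\neq 1$, the sequence \eqref{degV} begins $0^1,e_1^{s_1},\dots$, and since $e_1=t_1-1>0$ here, there is exactly one zero degree on the clique side. Hence $e_{s-r}=0$ forces $s-r=1$, and conversely $s-r=1$ lands on that single zero entry; this gives the first clause. In the case $t_1=1$, the sequence \eqref{degV1} begins $0^{1+s_1},e_2^{s_2},\dots$ with $e_2=t_2>0$, so there are exactly $1+s_1$ leading zeros. Therefore $e_{s-r}=0$ holds exactly when $1\leq s-r\leq 1+s_1$, i.e.\ $s-r\in\{1,\dots,s_1+1\}$, which is the second clause.

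The only subtlety I would be careful about is bookkeeping with the indexing convention: the degrees are enumerated as $e_1\le e_2\le\cdots\le e_s$ in Lemma~\ref{lem1}, but the generating-sequence formulas relabel these with subscripts $1,\dots,h$ attached to multiplicities. I would make explicit that the subscript $s-r$ in $e_{s-r}$ refers to the position in the fully expanded, multiplicity-counted list, not to the compressed index $j$ running over cells. Once this correspondence is stated clearly, the argument is just "which positional entry is the first nonzero one," and the count of leading zeros ($1$ when $t_1\neq 1$, and $1+s_1$ when $t_1=1$) is read off immediately from \eqref{degV} and \eqref{degV1}.

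The main obstacle, such as it is, is therefore not mathematical depth but rather verifying that the first nonzero clique-side degree is genuinely positive in each case, so that the leading block of zeros has exactly the claimed length with no accidental extra zero. This amounts to checking $e_1=t_1-1>0$ when $t_1\neq 1$ and $e_2=t_2>0$ when $t_1=1$, both of which follow from the standing assumption $t_i>0$ in \eqref{eq:bsequence}. With that positivity confirmed, the equivalence in both directions is immediate from comparing $s-r$ against the length of the zero block, completing the proof.
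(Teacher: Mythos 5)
Your proof is correct and follows essentially the same route as the paper: both read off the clique-side degree sequences \eqref{degV} and \eqref{degV1}, observe that the zeros occupy exactly the first position (when $t_1\neq 1$, since $e_1=t_1-1>0$) or the first $1+s_1$ positions (when $t_1=1$, since $e_2=t_2>0$), and conclude by locating the index $s-r$ relative to that zero block. Your version is in fact slightly more careful than the paper's two-line proof, since you make explicit the correspondence between the expanded indexing $e_1\le\cdots\le e_s$ of Lemma~\ref{lem1} and the multiplicity-compressed notation of \eqref{degV}--\eqref{degV1}, and you verify the positivity of the first nonzero entry in each case.
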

{\begin{proof}
From \eqref{degV}, it follows that for $t_1\neq 1$, $e_i=0$ if and only if $i=1$. Otherwise, from \eqref{degV1} $e_i=0$ if and only if $i\in\{1, \ldots, s_1+1\}$. This completed the proof. 
\end{proof}}

Next, Lemma \ref{lem1} applied to threshold graphs states the following.

\begin{lemma}
Let $G$ be a threshold graph generated by \eqref{eq:bsequence}. If $r>s$  or {$t_1\neq 1$ and $s-r=1$ or} $t_1=1$ and  $s-r\in\{1,2, \ldots, s_1+1\}$, then $G$ is not Hamiltonian.
\end{lemma}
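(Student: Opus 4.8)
The plan is to obtain the statement as an immediate corollary, combining Lemma~\ref{lem1} with the characterization of when $e_{s-r}=0$ provided by Lemma~\ref{zero}. The key observation is that every threshold graph is a split graph whose co-clique has size $r=T-1$ and whose maximal clique has size $s=S+1$, so Lemma~\ref{lem1} applies directly: $G$ is non-Hamiltonian whenever $r>s$, and also whenever $r<s$ together with $e_{s-r}=0$. The alternative $r>s$ is one of the three listed hypotheses and needs nothing further.

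The main step is to translate the branch ``$r<s$ and $e_{s-r}=0$'' into the language of the generating sequence. First I would note that each of the two remaining hypotheses forces $s>r$: the case ``$t_1\neq1$ and $s-r=1$'' and the case ``$t_1=1$ and $s-r\in\{1,\ldots,s_1+1\}$'' both impose $s-r\geq1$. Thus $s>r$ holds automatically, which is exactly the standing assumption of Lemma~\ref{zero}. Applying that lemma, each of these two hypotheses is precisely the condition guaranteeing $e_{s-r}=0$, so the second branch of Lemma~\ref{lem1} delivers non-Hamiltonicity of $G$. Gathering the three alternatives yields the claim.

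I do not expect a genuine obstacle here, since the substantive content is already packaged into Lemmas~\ref{lem1} and~\ref{zero}; the only point requiring care is the bookkeeping that matches the disjunction in the statement to the two formulations, confirming in particular that the index set $\{1,2,\ldots,s_1+1\}$ for $t_1=1$ (and the singleton value $s-r=1$ for $t_1\neq1$) is reproduced verbatim, so that the conditions describe the same family of sequences.
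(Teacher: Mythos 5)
Your proposal is correct and follows exactly the paper's intended route: the paper states this lemma without proof, introducing it as ``Lemma~\ref{lem1} applied to threshold graphs,'' i.e., as the immediate combination of Lemma~\ref{lem1} with the characterization of $e_{s-r}=0$ in Lemma~\ref{zero} that you carry out. Your additional check that both sequence-based hypotheses force $s>r$ (so that Lemma~\ref{zero} applies) is the right piece of bookkeeping and is consistent with the paper's setup, where $r=T-1$ and $s=S+1$ for both the $t_1=1$ and $t_1\neq 1$ cases.
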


In the sequel  we consider only threshold graphs, with {$s\geq r+2$ if $t_1\neq 1$} and with $s\geq r+s_1+2$ if $t_1=1$. 
Let the integer $\ell$ be defined in the following way: if {$s-r-1<s_1$}, then $\ell=0$; otherwise, $\ell$ is the least integer, such that {$\sum_{i=1}^\ell s_i\leq s-r-1<\sum_{i=1}^{\ell+1} s_i$ }(obviously, such an integer exists). 
 {The dropping of the vertices $y_1,y_2,\ldots, y_{s-r}$ from $J$, is equivalent to dropping  $y_1$ from $U_1$ and $y_2,\ldots, y_{s-r}$ from $V$. Consequently  the new graph $G^*$ is generated by the  binary sequence
\begin{equation}\label{G*}
\big(0^{\sum_{i=1}^{\ell+1}t_i-1}1^{\sum_{i=1}^{\ell+1} s_i-(s-r-1)}0^{t_{\ell+2}}1^{s_{\ell+2}}\cdots 0^{t_h}1^{s_h}\big).\end{equation}}
Next we state a reformulation of Lemma \ref{lem2}.
\begin{lemma}
Let $G$ be a threshold graph generated by \eqref{eq:bsequence} with $s\geq r\geq 2$. Then $G$ is Hamiltonian if and only if a threshold graph $G^*$ generated by \eqref{G*} is Hamiltonian.
\end{lemma}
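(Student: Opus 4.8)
The plan is to derive this reformulated statement directly from the original Lemma~\ref{lem2}. That lemma already establishes that $G$ is Hamiltonian if and only if the graph obtained by deleting the $s-r$ lowest-degree clique vertices $y_1,\dots,y_{s-r}$ is Hamiltonian, so the entire task reduces to verifying that this deleted graph is precisely the threshold graph generated by~\eqref{G*}. Before invoking Lemma~\ref{lem2} I would record that its hypothesis $e_{s-r}>0$ is met throughout the regime under consideration (namely $s\ge r+2$ when $t_1\ne1$ and $s\ge r+s_1+2$ when $t_1=1$): this is immediate from Lemma~\ref{zero}, which pinpoints exactly when $e_{s-r}=0$ can occur.

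I would then translate the deletion into the cell language of Figure~\ref{fig:nested}. As noted just before the statement, removing $y_1,\dots,y_{s-r}$ amounts to removing the single vertex $x$ from $U_1$ and the $s-r-1$ vertices of smallest degree from $V$. Because the $V$-degrees in~\eqref{degV} (and~\eqref{degV1}) increase cell by cell, those $s-r-1$ vertices are taken from $V_1,V_2,\dots$ in order; the defining inequality $\sum_{i=1}^{\ell}s_i\le s-r-1<\sum_{i=1}^{\ell+1}s_i$ then says exactly that $V_1,\dots,V_\ell$ are emptied while $(s-r-1)-\sum_{i=1}^{\ell}s_i$ vertices are removed from $V_{\ell+1}$, leaving $\sum_{i=1}^{\ell+1}s_i-(s-r-1)>0$ of them.

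The crux of the argument --- and the step I expect to need the most care --- is identifying the cell structure of the surviving graph. The key point is that in $G$ a vertex of $U_i$ is adjacent exactly to $V_i,V_{i+1},\dots,V_h$, so the cells $U_1,\dots,U_{\ell+1}$ are distinguished solely through their adjacencies to the now-deleted $V_1,\dots,V_\ell$. Once these are gone, every vertex of $U_1\setminus\{x\},U_2,\dots,U_{\ell+1}$ has the identical surviving neighborhood --- the remaining part of $V_{\ell+1}$ together with $V_{\ell+2},\dots,V_h$ --- so these cells coalesce into a single co-clique cell; since the remaining part of $V_{\ell+1}$ is nonempty, this merged cell is still properly separated from $U_{\ell+2}$. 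I would verify the coalescence simply by writing out the two neighborhoods and observing that they agree for every index $1\le i\le\ell+1$.

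Finally I would read off the generating sequence of the resulting threshold graph. The merged co-clique cell has size $(t_1-1)+t_2+\dots+t_{\ell+1}=\sum_{i=1}^{\ell+1}t_i-1$, the trimmed cell $V_{\ell+1}$ has size $\sum_{i=1}^{\ell+1}s_i-(s-r-1)$, and the cells $U_{\ell+2},V_{\ell+2},\dots,U_h,V_h$ are untouched; concatenating these gives precisely~\eqref{G*}. Hence $G^*$ is the graph to which Lemma~\ref{lem2} applies, and the stated equivalence follows. The only remaining bookkeeping is to check that the single formula~\eqref{G*} covers both $t_1\ne1$ and $t_1=1$ (where $U_1$ is emptied altogether and absorbed into the leading exponent $\sum_{i=1}^{\ell+1}t_i-1$) as well as the sub-case $\ell=0$ (where no $V$-cell is fully emptied and no merging takes place), which I would confirm by direct substitution.
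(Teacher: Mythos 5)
Your proposal is correct and takes essentially the same route as the paper, which presents this lemma as a direct reformulation of Lemma~\ref{lem2}: the paper's entire justification is the preceding derivation of the sequence \eqref{G*} from dropping $y_1$ from $U_1$ and $y_2,\ldots,y_{s-r}$ from $V$, exactly as you do. Your write-up is in fact more complete than the paper's, since you explicitly check the hypothesis $e_{s-r}>0$ via Lemma~\ref{zero} and spell out the coalescence of the cells $U_1\setminus\{x\},U_2,\ldots,U_{\ell+1}$ into the single co-clique cell of size $\sum_{i=1}^{\ell+1}t_i-1$, details the paper leaves implicit.
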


For {$s^*_{\ell+1}=\sum_{i=1}^{\ell+1} s_i-(s-r-1)$}, the degrees of vertices in $U^*$ and $V^*$ in the corresponding bipartite graph $B^*$ of~$G^*$ given in non-decreasing order are
\begin{align*}
d_1^*= \cdots =d_{t_h}^*&=s_h,\\
d_{t_h+1}^*=\cdots=d_{t_h+t_{h-1}}^*&=s_{h-1}+s_h,\\
&\phantom{a}\vdots\\
d_{t_{\ell+3}+\cdots+t_{h}+1}^*=\cdots=d_{t_{\ell+2}+\cdots+t_{h}}^*&=\sum_{i=\ell+2}^h s_i,\\
d_{t_{\ell+2}+\cdots+t_{h}+1}^*=\cdots=d_{t_1+t_2+\cdots+t_{h}}^*&= {\sum_{i=1}^h t_i-1}\\
\mbox{and}\\
e_1^*= \cdots =e_{s^*_{\ell+1}}^*&= {\sum_{i=1}^{\ell+1} t_i-1},\\
e_{s^*_{\ell+1}+1}^*=\cdots=e_{s^*_{\ell+1}+s_{\ell+2}}^*&= {\sum_{i=1}^{\ell+2} t_i-1},\\
&\phantom{a}\vdots\\
e_{s^*_{\ell+1}+s_{\ell+2}+\cdots+s_{h-1}+1}^*=\cdots=e_{s^*_{\ell+1}+s_{\ell+2}+\cdots+s_{h}}^*&={\sum_{i=1}^h t_i-1}.
\end{align*}

Next, we consider the system of inequalities $S_q$, $q\in\{0,1,\ldots, T-1\}$, for   {$\sum_{i=1}^hs_i=\sum_{i=1}^h t_i$}.

 {\begin{lemma}
Let $G$ be a threshold graph generated by \eqref{eq:bsequence}, with $\sum_{i=1}^hs_i=\sum_{i=1}^h t_i\geq 2$. Then $G$ is Hamiltonian if and only  if
\begin{align}
\sum_{i=j}^h s_i&\geq \sum_{i=j}^h t_i+1,\quad \mbox{for} \quad j=2,3,\ldots, h\label{con1}.
\end{align}
\end{lemma}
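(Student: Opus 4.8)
The plan is to read the statement off Theorem~\ref{th_Sq} after translating the system $S_q$, for a single well-chosen $q$, into the inequalities~\eqref{con1}. First I would invoke the reduction already prepared above: since $\sum_{i=1}^h s_i=\sum_{i=1}^h t_i$, the co-clique $U$ and the clique $V$ of $G$ have the same cardinality $n:=\sum_{i=1}^h t_i\ge 2$, and as $U$ is independent any Hamilton cycle must alternate between $U$ and $V$ and hence avoids every edge inside $V$. So $G$ is Hamiltonian if and only if the chain graph $B$ on the colour classes $U,V$ is Hamiltonian, and Theorem~\ref{th_Sq} applies to $B$ with $|U|=|V|=n$. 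Reading degrees directly from the construction of $B$, each vertex of $U_k$ has degree $\sigma_k:=\sum_{i=k}^h s_i$ (with multiplicity $t_k$) and each vertex of $V_k$ has degree $\pi_k:=\sum_{i=1}^k t_i$ (with multiplicity $s_k$); both lists are non-decreasing, so in the ordering $d_1\le\cdots\le d_n$ the values $\sigma_h,\sigma_{h-1},\dots,\sigma_1$ occupy consecutive blocks, and in $e_1\le\cdots\le e_n$ the values $\pi_1,\dots,\pi_h$ do.

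Then I would take $q=\sum_{i=2}^h t_i$, which lies in $\{0,\dots,n-1\}$ and is exactly the largest index of the $\sigma_2$-block of $d$. Since $d_j$ is constant on each block while $j+1$ increases, the conditions $d_j\ge j+1$ for $j\le q$ are equivalent to their instances at the block tops, that is, to $\sigma_k\ge\bigl(\sum_{i=k}^h t_i\bigr)+1$ for $k=2,\dots,h$, which is precisely~\eqref{con1}. For the complementary part the range is only $j=1,\dots,n-1-q=t_1-1$: on the $\pi_1$-block it reads $t_1\ge j+1$ (true for all $j\le t_1-1$); on any later block lying entirely in this range it reads $\pi_k\ge\bigl(\sum_{i=1}^k s_i\bigr)+1$, and using $\sum s_i=\sum t_i$ to rewrite $\pi_k-\sum_{i=1}^k s_i=\sigma_{k+1}-\sum_{i=k+1}^h t_i$ this is just~\eqref{con1} at index $k+1$, hence already implied; the single block straddling $t_1-1$ yields only the trivial $\pi_k\ge t_1$. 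Thus $S_q$ is equivalent to~\eqref{con1}, and since $B$ is Hamiltonian iff $S_q$ holds (Theorem~\ref{th_Sq}, using (b) in one direction and (c) in the other), the proof is complete.

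The delicate point, and the part I expect to require the most care, is the bookkeeping of the previous paragraph: one must check that $q=\sum_{i\ge 2}t_i$ falls exactly on a $\sigma$-block boundary so that the $d$-part collapses to precisely the $h-1$ inequalities of~\eqref{con1}, no more and no fewer, and that the truncated $e$-part contributes nothing beyond them. The two essential inputs are the identity $\pi_k-\sum_{i=1}^k s_i=\sigma_{k+1}-\sum_{i=k+1}^h t_i$, valid only because $\sum s_i=\sum t_i$, which converts the prefix-sum inequalities of the $e$-part into the tail-sum inequalities of~\eqref{con1}, and the observation that the $\pi_h$-block can never lie entirely within $\{1,\dots,t_1-1\}$, so the impossible inequality $\sigma_{h+1}\ge(\sum_{i=h+1}^h t_i)+1$ is never requested.
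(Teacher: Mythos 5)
Your proof is correct, and its skeleton is the same as the paper's: reduce to the chain graph $B$ (with $|U|=|V|$, a Hamilton cycle must alternate between the colour classes, so clique edges are unusable), then invoke Theorem~\ref{th_Sq} at a single value of $q$ --- (b) for sufficiency, (c) for necessity --- and collapse the degree conditions block by block, using that the degree sequence is constant on blocks while the bound $j+1$ grows. The one genuine difference is the choice of $q$: the paper takes $q=T-1$, for which the $e$-part of $S_q$ is empty (the range $j=1,\dots,|U|-1-q$ is void), so the whole system reduces immediately to the block-top inequalities $\sum_{i=j}^h s_i\geq\sum_{i=j}^h t_i+1$, the $j=1$ instance being discarded because $S=T$ forces it to hold with equality. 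Your choice $q=\sum_{i=2}^h t_i$ extracts \eqref{con1} from the $d$-part just as cleanly, but leaves a nonempty truncated $e$-part ($j\leq t_1-1$) that you must show is vacuous or implied; this is where your prefix-to-tail identity $\pi_k-\sum_{i=1}^k s_i=\sigma_{k+1}-\sum_{i=k+1}^h t_i$ (valid precisely because $S=T$) and the boundary cases enter, including the correct observation that the $\pi_h$-block cannot lie inside $\{1,\dots,t_1-1\}$, so no out-of-range inequality is ever demanded. All of that bookkeeping checks out, so your argument is a valid, slightly longer instantiation of the same method; the paper's parameter choice buys the identical conclusion with the entire $e$-side analysis eliminated.
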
}

\begin{proof}
Recall from Section~\ref{non} that a threshold graph $G$, with {$S=T$}, is Hamiltonian if and only if the corresponding chain graph $B$ is Hamiltonian. Next, by Theorem \ref{th_Sq}, the chain graph $B$ is Hamiltonian if and only $S_q$ holds for $q=T-1$. On the other hand, for \eqref{con1} and each repeated vertex degree, we have 
\begin{eqnarray*}
\quad\begin{cases}
 \sum_{i=j}^h s_i\geq \sum_{i=j+1}^h t_i+2, \\
\qquad\phantom{a,,,}\vdots\\
\sum_{i=j}^h s_i\geq \sum_{i=j+1}^h t_i+t_j+1=\sum_{i=j}^h t_i+1.
\end{cases}
\end{eqnarray*}
Now, it is easy to see that  $S_{T-1}$ holds if and only if \eqref{con1} holds.
Note that the inequality $\sum_{i=1}^h s_i\geq \sum_{i=1}^h t_i-1+1$ is not included, since (by the assumption that $S=T$) it holds as equality. 
\end{proof}

\begin{remark}
The left hand sides of \eqref{con1} are equal to  the vertex degrees, while the right hand sides register the position of the last occurrence of the corresponding vertex degree augmented by $1$.  
\end{remark}

%

Gathering all the previous results, we arrive at our main result, the criterion for the Hamiltonicity of a threshold graph based on its generating binary sequence.

{\begin{theorem}\label{iff}
Let $G$ be a threshold graph generated by \eqref{eq:bsequence}, such that $r\geq 2$. If either $s\leq r+1$ for $t_1\neq 1$ or $s\leq r+s_1+1$ for $t_1=1$, then $G$ is not Hamiltonian. Otherwise, $G$ is Hamiltonian if and only if 
 $\ell+1=h$ or
\begin{align}
 \sum_{i=j}^h s_i&\geq \sum_{i=j}^h t_i, \quad \mbox{for}\quad j=\ell+2,\ell+3,\ldots, h,\label{conf1}
\end{align}
where for $s-r-1<s_1$, $\ell=0$, and otherwise $\ell$ is the least integer such that $\sum_{i=1}^\ell s_i\leq s-r-1<\sum_{i=1}^{\ell+1} s_i$.
\end{theorem}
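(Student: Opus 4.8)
The plan is to assemble Theorem~\ref{iff} from the three reformulated ingredients already in hand: the non-Hamiltonicity criterion (the restatement of Lemma~\ref{lem1} together with Lemma~\ref{zero}), the reduction to a balanced instance (the restatement of Lemma~\ref{lem2} through \eqref{G*}), and the balanced-case criterion \eqref{con1}. The proof is a matter of splicing these together and carrying the indices through the reduction.

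First I would dispose of the non-Hamiltonian range. If $t_1\neq 1$ and $s\leq r+1$, or $t_1=1$ and $s\leq r+s_1+1$, then $s-r$ either satisfies $r\geq s$ or lies in the range for which Lemma~\ref{zero} forces $e_{s-r}=0$; in either situation the reformulation of Lemma~\ref{lem1} certifies that $G$ is not Hamiltonian. This is the easy half, and all it really requires is the observation that the stated upper bounds on $s$ are exactly the complement of the regime $s\geq r+2$ (respectively $s\geq r+s_1+2$) in which the reduction of Lemma~\ref{lem2} is applied.

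In the complementary regime $\ell$ is well defined and, by the restatement of Lemma~\ref{lem2}, $G$ is Hamiltonian if and only if $G^*$ generated by \eqref{G*} is Hamiltonian. The first thing to verify is that $G^*$ is balanced, i.e.\ that its block data satisfy $\sum_i s_i^*=\sum_i t_i^*$. A short computation using $s=S+1$, $r=T-1$ and $s^*_{\ell+1}=\sum_{i=1}^{\ell+1}s_i-(s-r-1)$ gives $\sum_i s_i^*=\sum_i t_i^*=r$, so the balanced-case criterion \eqref{con1} does apply to $G^*$ (note $r\geq 2$). When $\ell+1=h$ the sequence \eqref{G*} collapses to the single block $0^r1^r$, whose associated chain graph is $K_{r,r}$; since $r\geq 2$ this is Hamiltonian, which accounts for the clause ``$\ell+1=h$'' in the statement and requires no inequalities at all.

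When $\ell+1<h$ I would apply \eqref{con1} to $G^*$ and translate back to the original sequence. The decisive observation is that the blocks of $G^*$ indexed $2,3,\ldots,h-\ell$ coincide with the blocks of the original sequence indexed $\ell+2,\ell+3,\ldots,h$, so the inequalities of \eqref{con1} for $G^*$ (which range over $j=2,\ldots,h-\ell$ and hence never touch the amalgamated first block) rewrite, under the shift $k\mapsto\ell+k$, as the tail system \eqref{conf1}. I expect the main obstacle to be precisely this bookkeeping: one must check that the amalgamated first block of $G^*$ never enters the inequalities (recall \eqref{con1} deliberately omits $j=1$, where equality holds), keep the additive constants consistent through the passage from $G^*$ to $G$, and confirm that the $t_1=1$ split-graph case, with its different degree lists \eqref{degU1}--\eqref{degV1}, is absorbed by the same \eqref{G*} and yields the same tail inequalities. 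Once the index shift and the constants are pinned down, the equivalence with \eqref{conf1} is immediate.
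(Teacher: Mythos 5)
Your architecture is exactly the paper's: the non-Hamiltonian range via Lemma~\ref{zero} and the restated Lemma~\ref{lem1}, the reduction to $G^*$ of \eqref{G*} via the restated Lemma~\ref{lem2}, the balance check $\sum_i s_i^*=\sum_i t_i^*=r$, the case $\ell+1=h$ (where $G^*$ collapses to $0^r1^r$, Hamiltonian since $r\geq 2$), and finally \eqref{con1} applied to $G^*$ with an index shift. The extra details you supply (the balance computation, the $K_{r,r}$ observation) are correct and are left implicit in the paper.

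However, the step you single out as the main obstacle and then declare immediate is precisely where the argument fails --- and it fails in the paper's own proof in the same way. Applying \eqref{con1} to $G^*$ gives $\sum_{k=j^*}^{h-\ell}s_k^*\geq\sum_{k=j^*}^{h-\ell}t_k^*+1$ for $j^*=2,\ldots,h-\ell$, and since for $k\geq 2$ the starred blocks coincide with the original blocks $\ell+k$, the shift yields $\sum_{i=j}^{h}s_i\geq\sum_{i=j}^{h}t_i+1$ for $j=\ell+2,\ldots,h$, with the additive constant $+1$ intact. This is strictly stronger than \eqref{conf1} as printed, and the discrepancy is not cosmetic: the threshold graph generated by $0^31^40^21^2$ has $t_1\neq 1$, $r=4$, $s=7$, hence $\ell=0$ and $\ell+1\neq h$, and it satisfies \eqref{conf1} with equality at $j=2$ (since $s_2=t_2=2$), yet it is not Hamiltonian --- the two vertices of $U_2$ have only the two vertices of $V_2$ as neighbours, so any Hamilton cycle of this $11$-vertex graph would have to contain a closed $4$-cycle through $U_2\cup V_2$; equivalently, its reduction $G^*$ generated by $0^21^20^21^2$ violates \eqref{con1} at $j=2$. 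So no amount of bookkeeping can produce \eqref{conf1} in its stated form: the correct output of your translation is the $+1$ version, the paper's proof silently drops the constant at its final ``i.e.'' step, and the internal evidence (Corollary~\ref{hcg} retains the $+1$; Example~5.2's clause ``$s_2\geq t_2$'' inherits the slip) confirms that \eqref{conf1} should read $\sum_{i=j}^{h}s_i\geq\sum_{i=j}^{h}t_i+1$. With that correction your proposal is a complete proof and coincides with the paper's argument; as written, both your last step and the theorem's displayed criterion are off by one.
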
}

\begin{proof}
A threshold graph $G$ generated by \eqref{eq:bsequence} is Hamiltonian if and only $G^*$ generated by \eqref{G*} is Hamiltonian. Now, $G^*$ is Hamiltonian if and only if the corresponding chain graph $B^*$ generated by \eqref{G*} is Hamiltonian. The last one holds if and only if $\ell+1=h$ or otherwise if and only if the system of inequalities \eqref{con1}  holds for $B^*$, i.e., if and only if
\begin{align*}
\sum_{i=j}^h s_i&\geq{ \sum_{i=j}^h t_i, \quad \mbox{for}\quad j=\ell+2,\ell+3,\ldots, h,}\\
\end{align*}
which completes the proof.
\end{proof}

As a corollary we state a necessary and sufficient condition for a chain graph to be Hamiltonian. Note that Hamiltonian chain graph has the colour classes of the same size  (see \cite{bip}) and  cannot have any pendant edges. Moreover,  in any Hamiltonian chain graph  generated by \eqref{eq:bsequence}, the inequalities  $t_1\geq s_1+1$ and $s_h\geq t_h+1$ must hold. 

\begin{corollary}\label{hcg}
Let $G$ be a chain graph generated by \eqref{eq:bsequence}, such that $T\geq 2$. If either $S\neq T$ or $S=T$ and $t_1< s_1+1$ or $s_h<t_h+1$, then $G$ is not Hamiltonian. Otherwise, $G$ is Hamiltonian if and only if 
$$\sum_{i=j}^h s_i\geq \sum_{i=j}^h t_i+1 \quad \mbox{for}\quad j=2,3,\ldots, h.$$
\end{corollary}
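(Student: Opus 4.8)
The plan is to derive the corollary from the characterisation already obtained for threshold graphs with balanced cells, using the bipartite nature of a chain graph to dispose of the parity obstruction at the outset. A chain graph $G$ generated by \eqref{eq:bsequence} is bipartite with colour classes $U$ and $V$ of sizes $T$ and $S$. A bipartite graph whose parts have different sizes has no spanning cycle (an even cycle meets both parts equally), so $S\neq T$ rules out Hamiltonicity immediately; this settles the first alternative and agrees with the equal-class requirement recorded in the preceding remark (see \cite{bip}). Assuming henceforth $S=T\geq 2$, I would invoke the bridge from Section~\ref{non}: for a threshold graph with $|U|=|V|$ the clique edges cannot appear in any Hamilton cycle (each co-clique vertex forces both of its cycle-edges into $V$, saturating the degree budget of $V$), so the threshold graph generated by $b$ and the chain graph $G$ generated by the same $b$ are Hamiltonian or not together. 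The lemma that established \eqref{con1} then yields exactly that $G$ is Hamiltonian if and only if $\sum_{i=j}^h s_i\geq\sum_{i=j}^h t_i+1$ for $j=2,3,\ldots,h$.

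The second step is to translate the two quick obstructions into failures of \eqref{con1}. Setting $j=h$ in \eqref{con1} gives precisely $s_h\geq t_h+1$, while setting $j=2$ and subtracting from the equality $\sum_{i=1}^h s_i=\sum_{i=1}^h t_i$ (valid since $S=T$) gives $t_1-s_1\geq 1$, that is $t_1\geq s_1+1$. Consequently, if either $t_1<s_1+1$ or $s_h<t_h+1$ then the instance $j=2$ or $j=h$ of \eqref{con1} fails, so $G$ is not Hamiltonian; this is the contrapositive of the necessary conditions asserted in the remark. The remaining ``otherwise'' case is exactly $S=T$ together with $t_1\geq s_1+1$ and $s_h\geq t_h+1$, and for it the characterisation via \eqref{con1} from the first step applies verbatim, so nothing further is needed there.

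I expect the genuine care to lie not in the inequality manipulations but in two bookkeeping points. First, the identification used in the first paragraph must be pinned down: the bipartite graph on the cells $U,V$ obtained by deleting the clique edges is \emph{literally} the chain graph generated by $b$, and the ``clique edges are unusable'' argument must be checked to hold for every $h$, not only in the reduced setting of Section~\ref{non}. Second, the degenerate level count $h=1$ needs separate, trivial treatment: there $b=0^{t_1}1^{s_1}$ gives the complete bipartite graph $K_{t_1,s_1}$, the system \eqref{con1} is empty, and $G$ is Hamiltonian precisely when $t_1=s_1\geq 2$ even though $t_1\geq s_1+1$ then fails; thus the displayed necessary inequalities are meant for $h\geq 2$, and $K_{t_1,t_1}$ must be recorded as Hamiltonian by hand. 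This edge case is the one place where the stated conditions do not read off the general argument, and I would flag it explicitly rather than fold it into the main reduction.
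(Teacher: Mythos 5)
Your derivation is correct and is essentially the route the paper intends: Corollary \ref{hcg} is stated there without proof, as an immediate consequence of the lemma establishing \eqref{con1} (which itself rests on Theorem \ref{th_Sq} and the observation that clique edges are unusable when $|U|=|V|$) together with the Moon--Moser equal-parts fact, and your reduction of the two quick obstructions to the instances $j=2$ (combined with $S=T$) and $j=h$ of the displayed system is exactly the content of the remark preceding the corollary. Your $h=1$ caveat is moreover a genuine catch rather than mere bookkeeping: for $b=(0^{t}1^{t})$ the chain graph is $K_{t,t}$, which is Hamiltonian for $t\geq 2$, while the corollary's first clause (since $S=T$ forces $t_1<s_1+1$) would declare it non-Hamiltonian even though the inequality system is vacuous there, so the statement as printed implicitly assumes $h\geq 2$ and the paper does not flag this.
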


\section{Algorithms}\label{alg}

\noindent In this section we  present  algorithms for recognition of Hamiltonian threshold graph and Hamiltonian chain graph. The input is  a binary generating sequence, and in return we obtain the decision whether the corresponding threshold (resp.~chain) graph is Hamiltonian or not.

\medskip

\noindent\textbf{Algorithm 1 (checks if a given threshold graph is Hamiltonian).}
	
\begin{enumerate}
\item[(0)] INPUT: Generating binary sequence $(0^{t_1}1^{s_1})(0^{t_2}1^{s_2})\cdots (0^{t_h}1^{s_h})$.
{\item Calculate $r$ and $s$. $r=\sum_{i=1}^h t_i-1$; $s=\sum_{i=1}^h s_i+1$.
\item If 
$r=1\land d_1=s_h\geq 2$ then RETURN TRUE. If $r=1\land d_1=s_h=1$ then 
RETURN FALSE.

\item If $(t_1\neq 1\land s\leq r+1)\lor(t_1=1\land s\leq 
r+s_1+1)$ then RETURN FALSE.

\item Determine the least integer $\ell$, such that $\sum_{i=1}^\ell s_i\leq s-r-1<\sum_{i=1}^{\ell+1} s_i$. If $s-r-1<s_1$, take $\ell=0$.
\item If $\ell+1=h$ or all inequalities in \eqref{conf1} hold  then RETURN TRUE. Otherwise, RETURN FALSE. }
\end{enumerate}

\medskip

\noindent\textbf{Algorithm 2 (checks if a given chain graph is Hamiltonian).}
	
\begin{enumerate}
\item[(0)] INPUT: Generating binary sequence $(0^{t_1}1^{s_1})(0^{t_2}1^{s_2})\cdots (0^{t_h}1^{s_h})$.
\item Calculate $T$ and $S$. $T=\sum_{i=1}^h t_i$ and   $S=\sum_{i=1}^h s_i$.
\item If $T\neq S\lor(T=S\land 
(t_1<s_1+1\lor s_h<t_h+1))$ then RETURN FALSE.
\item If  the inequalities  $\sum_{i=j}^h s_i\geq \sum_{i=j}^h t_i+1$ hold for $j=2,3,\ldots, h$ then RETURN TRUE. Otherwise, RETURN FALSE. 
\end{enumerate}

\medskip

It is not difficult to deduce that both algorithms are linear. Indeed, for step (5) of the former one and step (3) of the latter one we first compute and compare the sums for $j=h$, then for $j=h-1$, and so on. In this way, every sum is computed on the basis of the previous one and we have at most $n$ iterations such that each one is performed with $O(1)$ operations, which gives $O(n)$ for these steps. The complexity of the remaining steps is obvious.

We give some examples illustrating the applications of the  algorithms.

\begin{example}
Let $G$ be a complete split graph, i.e, a threshold graph generated by $(0^{t_1}1^{s_1})$. If $ t_1=1$, then $G$ is Hamiltonian. Otherwise, for $t_1\geq 2$, if $s_1\leq t_1-1$, then $G$ is not Hamiltonian. If $s_1\geq t_1$, then $G$ is Hamiltonian, since in this case we have $\ell=0$ and $\ell+1=h=1$.

Therefore, we conclude that a complete split graph is Hamiltonian if and only if the size of the clique is greater than or equal to the size of the co-clique, except for the case where both are equal to $1$.
\end{example}

\begin{example}
Let $G$ be a threshold graph generated by $(0^{t_1}1^{s_1}0^{t_2}1^{s_2})$. If $r=1$ (i.e., $t_1=t_2=1$), then $G$ is Hamiltonian if and only if $s_2\geq 2$. For $r\geq 2$, if either $t_1+t_2+2>s_1+s_2$, $t_1\neq 1$ or $s_2<1+t_2$, $t_1=1$, then $G$ is not Hamiltonian. Otherwise, if $s-r-1<s_1$, then $G$ is Hamiltonian if and only if $s_2\geq t_2$, while if $s-r-1\geq s_1$, then $G$ is necessarily Hamiltonian.
\end{example}

\begin{example}
Let $G$ be a particular threshold graph generated by $(0^31^40^{10}1^60^51^{11}0^31^8)$. In this case we have $r=20$ and $s=30$. Implementing the step (4) of Algorithm 1, we get $4<30-20-1<10$, which implies that $\ell=1$. We next verify that the following inequalities hold: $s_3+s_4\geq t_3+t_4$ and $s_4\geq t_4$, and since they do, we conclude that $G$ is Hamiltonian.
\end{example}

\begin{example}
Let $G$ be a particular chain graph generated by $(0^31^40^{10}1^60^51^{3}0^31^8)$. In this case we have $r=s=21$. By the step (3) of the algorithm 2, we get $s_2+s_3+s_4\not\ge t_2+t_3+t_4+1$, which implies that  $G$ is not Hamiltonian.
\end{example}

\section{The minimum number of Hamilton cycles in a Hamiltonian chain graph of a prescribed order}\label{min}

\noindent In this section we give some observations on Hamiltonian chain graphs and we also determine chain graphs with minimum number of Hamilton cycles. The problem on the  value of the minimum number of Hamilton cycles in a given graph has been considered for some special graph classes.  For existing literature and recent results related to  threshold graphs, we refer the reader to \cite{JGT}. 

An edge of a chain graph $G$ generated by \eqref{eq:bsequence} is called a {\it key edge} of $G$ if it joins a vertex in $U_i$ to a vertex in $V_{i}$ for some $1\leq i\leq h$ (see \ref{fig:nested} (b)). As it will be shown in the sequel, key edges play a significant role in determining Hamiltonian chain graphs. We proceed by the following  lemmas.

\begin{lemma}
Let $e$ be a key edge of a chain graph $G$ generated by \eqref{eq:bsequence}, then $G-e$ is a chain graph.
\end{lemma}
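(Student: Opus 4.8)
The plan is to use the standard characterization that a bipartite graph is a chain graph if and only if the neighbourhoods of the vertices in one colour class are totally ordered by inclusion (equivalently, it contains no induced $2K_2$). I would first record the adjacency pattern forced by the generating sequence \eqref{eq:bsequence}: a black vertex created in the block $1^{s_j}$ dominates exactly the white vertices present at that moment, namely those of $U_1\cup\cdots\cup U_j$. Reading this off for every block shows that each white vertex $u\in U_i$ has neighbourhood $N(u)=\bigcup_{k=i}^{h}V_k$, so the white neighbourhoods form the nested chain $\bigcup_{k\geq 1}V_k\supseteq\bigcup_{k\geq 2}V_k\supseteq\cdots\supseteq V_h$, as they must for a chain graph.

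Now let $e=uv$ be the key edge to be deleted, with $u\in U_i$ and $v\in V_i$. Deleting $e$ changes only the neighbourhood of $u$, which becomes $N_{G-e}(u)=\bigl(\bigcup_{k=i}^{h}V_k\bigr)\setminus\{v\}$, while every other white neighbourhood is unchanged. I would then verify directly that the modified family of white neighbourhoods is still totally ordered by inclusion. For any white vertex $u'\in U_k$ with $k\leq i$ one has $N(u')=\bigcup_{m\geq k}V_m\supseteq\bigcup_{m\geq i}V_m\supseteq N_{G-e}(u)$, while for $k\geq i+1$ the crucial point is that $v\in V_i$ lies in the lowest layer of $N(u)$, so $v\notin\bigcup_{m\geq k}V_m=N(u')$ and hence $N(u')\subseteq\bigl(\bigcup_{m\geq i}V_m\bigr)\setminus\{v\}=N_{G-e}(u)$. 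Thus $N_{G-e}(u)$ sits between $\bigcup_{m\geq i}V_m$ and $\bigcup_{m\geq i+1}V_m$ in the inclusion chain, so comparability is preserved for every pair of white vertices. By the characterization, $G-e$ is a chain graph.

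The step that carries the real content, and the one I would present most carefully, is the case $k\geq i+1$, because it is exactly here that the matching index of a key edge is used: the deleted endpoint $v$ must lie in $V_i$ (the same index as $U_i$) so that it is absent from every strictly higher neighbourhood $N(u')$. If instead $v\in V_j$ with $j>i$ (a non-key edge still incident with $u$), then for any $u''\in U_{i'}$ with $i<i'\leq j$ one finds some $w\in V_i$ with $w\in N_{G-e}(u)\setminus N(u'')$ and also $v\in N(u'')\setminus N_{G-e}(u)$, so the two neighbourhoods become incomparable and an induced $2K_2$ is created. I would include this remark to make clear why the hypothesis that $e$ is a \emph{key} edge is essential and cannot be relaxed to an arbitrary edge.
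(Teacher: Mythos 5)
Your proof is correct, but it takes a genuinely different route from the paper's. The paper argues constructively: it splits into four cases according to whether $t_i>1$ or $t_i=1$ and $s_i>1$ or $s_i=1$, and in each case exhibits the explicit binary generating sequence of $G-e$ (for instance, when $t_i,s_i>1$ the block $0^{t_i}1^{s_i}$ becomes $(0^{t_i-1}1^1)(0^11^{s_i-1})$, with adjacent blocks merging in the degenerate cases). You instead verify that the white neighbourhoods remain totally ordered by inclusion after the deletion, which eliminates the case analysis entirely: the single observation that $v\in V_i$ is absent from $N(u')=\bigcup_{m\geq k}V_m$ for every $u'\in U_k$ with $k\geq i+1$ does all the work uniformly, placing $N_{G-e}(u)$ strictly between two consecutive levels of the existing chain. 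The trade-off is that the paper's proof yields, as a by-product, the exact generating sequence of $G-e$ within the family \eqref{eq:bsequence}, while yours gives only existence; in exchange, your argument is shorter, immune to bookkeeping slips in the merged blocks, and your closing remark correctly isolates why the hypothesis that $e$ is a \emph{key} edge cannot be dropped (the cells $U_{i'}$ with $i<i'\leq j$ are nonempty since all $t_{i'}>0$, so an induced $2K_2$ genuinely appears). One small caveat: you rely on the characterization of chain graphs by nested neighbourhoods (equivalently, $2K_2$-freeness within bipartite graphs), whereas the paper's working definition is via generating sequences, and the paper states the nesting property explicitly only for threshold graphs; the equivalence is standard, but to make your proof self-contained relative to the paper you should either cite it or add the one-line reconstruction of a generating sequence from a totally ordered neighbourhood family.
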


\begin{proof}
Let $e=uv$, where $u\in U_i$ and $v\in V_{i}$. We consider the following cases.\\

\noindent Case 1. If $t_i>1$, $s_i>1$, then $G-e$ is a chain graph generated  by $$(0^{t_1}1^{s_1})\cdots (0^{t_{i-1}}1^{s_{i-1}})(0^{t_i-1}1^1)(0^11^{s_i-1})( 0^{t_{i+1}}1^{s_{i+1}})\cdots (0^{t_h}1^{s_h}).$$

\item[] Case 2. If $t_i>1$, $s_i=1$, i.e., if $G$ is generated by $$(0^{t_1}1^{s_1})\cdots (0^{t_{i-1}}1^{s_{i-1}})(0^{t_i}1^1)(0^{t_{i+1}}1^{s_{i+1}})\cdots (0^{t_h}1^{s_h}),$$
then $G-e$ is a chain graph generated by
$$(0^{t_1}1^{s_1})\cdots (0^{t_{i-1}}1^{s_{i-1}})(0^{t_i-1}1^1)(0^{t_{i+1}+1}1^{s_{i+1}})\cdots (0^{t_h}1^{s_h}).$$

\item[]Case 3. If $t_i=1$, $s_i>1$, i.e, if $G$ is generated by $$(0^{t_1}1^{s_1})\cdots (0^{t_{i-1}}1^{s_{i-1}})(0^{1}1^{s_i})(0^{t_{i+1}}1^{s_{i+1}})\cdots (0^{t_h}1^{s_h}),$$
then $G-e$ is a chain graph generated by
$$(0^{t_1}1^{s_1})\cdots (0^{t_{i-1}}1^{s_{i-1}+1})(0^{1}1^{s_i-1})(0^{t_{i+1}+1}1^{s_{i+1}})\cdots (0^{t_h}1^{s_h}).$$

\item[] Case 4. If $t_i=s_i=1$, i.e., if $G$ is generated by $$(0^{t_1}1^{s_1})\cdots (0^{t_{i-1}}1^{s_{i-1}})(0^{1}1^1)(0^{t_{i+1}}1^{s_{i+1}})\cdots (0^{t_h}1^{s_h}),$$
then $G-e$ is a chain graph generated by
$$(0^{t_1}1^{s_1})\cdots (0^{t_{i-1}}1^{s_{i-1}+1})(0^{t_{i+1}+1}1^{s_{i+1}})\cdots (0^{t_h}1^{s_h}).$$
\end{proof}

\begin{lemma}\label{key}
Every key edge of a Hamiltonian chain graph lies in at least one Hamilton cycle.
\end{lemma}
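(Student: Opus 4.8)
The plan is to show that every key edge of a Hamiltonian chain graph $G$ appears in at least one Hamilton cycle. Let $G$ be generated by \eqref{eq:bsequence} and fix a key edge $e=uv$ with $u\in U_i$ and $v\in V_i$. Since $G$ is Hamiltonian, Corollary~\ref{hcg} applies: we have $S=T$, the boundary inequalities $t_1\ge s_1+1$ and $s_h\ge t_h+1$ hold, and the system $\sum_{i=j}^h s_i\ge\sum_{i=j}^h t_i+1$ holds for $j=2,\ldots,h$. My first step is to record the structural fact that all vertices in a single cell $U_i$ (respectively $V_i$) are \emph{twins}: they have exactly the same neighbourhood. This is immediate from the nested structure of chain graphs displayed in Figure~\ref{fig:nested}(b), where the neighbourhoods of $U_1,U_2,\ldots,U_h$ form a strictly increasing chain and likewise for $V$.

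The heart of the argument is then a twin-swap observation. Because $G$ is Hamiltonian, it has at least one Hamilton cycle $C$. If $C$ already uses the edge $e=uv$, we are done. Otherwise, since $v$ has degree at least two in $C$ and its neighbours all lie in $\bigcup_{k\ge i} U_k$, the cycle $C$ meets $u$'s cell $U_i$ at some vertex, and more importantly $C$ visits $v$ via two edges $vu'$ and $vw'$ for some neighbours $u',w'$ of $v$. I want to reroute $C$ so that it passes through $e$. Here I would use the twin property together with the fact that $u$ and $v$ each have neighbours outside the key pair: the inequalities from Corollary~\ref{hcg} guarantee $u$ has degree $d(u)=\sum_{k\ge i}s_k\ge\sum_{k\ge i}t_k+1\ge 2$ (for $i\ge 2$) and a separate check handles $i=1$ using $t_1\ge s_1+1$, so neither $u$ nor $v$ is pendant. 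The swap replaces the two edges incident to $v$ in $C$, and the two incident to $u$, by edges that include $e$, using a twin vertex of $u$ (or of $v$) to absorb the displaced edge. Concretely, if $u'\in U_i$ is the vertex of $u$'s cell that $C$ actually routes through $v$, then since $u$ and $u'$ are twins, interchanging their roles along $C$ produces another Hamilton cycle in which $v$ is adjacent to $u$, i.e.\ one containing $e$.

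The main obstacle I anticipate is making the rerouting rigorous in the boundary and degenerate cases, particularly when a cell has size one (so no twin is available to absorb a displaced edge) or when $i=1$ or $i=h$. In those situations the clean twin-swap is unavailable and one must instead exploit the slack in the inequalities $t_1\ge s_1+1$ and $s_h\ge t_h+1$ to find an alternative vertex to reroute through. I would handle these by a short case analysis mirroring the four cases of the preceding lemma, checking in each case that deleting $e$ leaves a chain graph (already established) whose degree sequence, via Corollary~\ref{hcg}, still admits enough connectivity to reconnect the two paths of $C\setminus\{u,v\}$ through $e$. A cleaner alternative, which I would prefer if it goes through, is to argue directly from the explicit Hamilton cycle construction underlying Theorem~\ref{th_Sq}: that construction alternates between the two colour classes respecting the nested neighbourhoods, and by choosing the alternation to pair the specific vertices $u$ and $v$ of the key edge one obtains a Hamilton cycle through $e$ by design, avoiding the swap entirely. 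Either way, the degree inequalities of Corollary~\ref{hcg} are the quantitative input ensuring the reconnection is always possible.
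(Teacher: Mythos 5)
Your central rerouting step has a genuine gap. The twin-swap only works when the Hamilton cycle $C$ happens to visit $v$ through a vertex $u'\in U_i$, i.e.\ through a twin of $u$. Nothing forces this: the cycle-neighbours of $v$ may lie in any cells $U_k$ with $k\le i$, and both may satisfy $k<i$; symmetrically, the cycle-neighbours of $u$ may all lie in $V_j$ with $j>i$. In that situation no twin of either endpoint sits next to the other endpoint on $C$, and swapping $u$ with a non-twin $t\in U_k$, $k<i$, fails, because after the swap $u$ must be adjacent to $t$'s other cycle-neighbour $s'\in V_{j'}$, where only $j'\ge k$ is guaranteed, not $j'\ge i$. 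Your proposed fallbacks do not close this hole: the inequalities of Corollary~\ref{hcg} are global counting conditions and provide no local reconnection device, and the ``explicit Hamilton cycle construction underlying Theorem~\ref{th_Sq}'' is not available in the paper --- that theorem is quoted from \cite{HP87} without a constructive proof --- so ``choosing the alternation to pair $u$ with $v$'' is an unexecuted plan rather than an argument. The case analysis you defer is precisely the missing content of the proof.

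The repair, and what the paper actually does, is an \emph{edge} exchange rather than a \emph{vertex} swap, and it needs neither twins nor any degree inequality. Orient $C$ and write it as $(u,s,\ldots,v,t,\ldots)$, where $s$ is the successor of $u$ and $t$ is the successor of $v$ (both distinct from $v$ and $u$ respectively, since $e\notin C$). From $s\sim u\in U_i$ we get $s\in V_j$ with $j\ge i$, and from $t\sim v\in V_i$ we get $t\in U_k$ with $k\le i$; the nested structure of the chain graph then forces $s\sim t$, since $k\le i\le j$. Deleting $us$ and $vt$ and inserting $uv$ and $st$ (which reverses the segment of $C$ from $s$ to $v$) yields a Hamilton cycle containing $e$. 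This single exchange is uniform in $i$ and in the cell sizes, so all the boundary and degenerate cases you were worried about never arise; the crucial observation you missed is the automatic adjacency $s\sim t$.
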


\begin{proof}
Let $G$ be Hamiltonian chain graph, $e=uv$ be a key edge of $G$, with $u\in U_i$ and $v\in V_i$, and let $C$ be a Hamilton cycle. If $e\in C$, there is nothing to prove.  Otherwise, let $C$ has the form $(u,s,\ldots, v,t\ldots)$. Then $u\sim s$ and $v\sim t$ which implies $s\in V_j$ for some $j\geq i$ and $v\in U_k$ for some $k\leq i$ and consequently $s\sim t$. So, both $uv$ and $st$ are the edges of $G$.  The cycle $C'$ obtained by adding these two edges to $C$ and deleting $us$ and $vt$ from $C$ is Hamilton and contains $e$.
\end{proof}

We are ready for the main result of this section.

\begin{theorem}
The minimum number of  Hamilton cycles in a Hamiltonian chain   graph of order $n=2h, h\geq 2$, is $2^{h-2}$ and this number is attained uniquely by the chain graph $G_n$ generated by \begin{equation}\label{minHC}\underbrace{(0^21)(01)\ldots (01^2)}_{2(h-1)}.
\end{equation}
\end{theorem}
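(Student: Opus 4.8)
The claim has two parts: a lower bound stating that every Hamiltonian chain graph of order $n=2h$ has at least $2^{h-2}$ Hamilton cycles, and an extremal statement identifying $G_n$ in \eqref{minHC} as the unique minimizer. The natural strategy is to first analyze the specific graph $G_n$ to see both why it is Hamiltonian and why it has exactly $2^{h-2}$ Hamilton cycles, and then to prove the lower bound by showing that any Hamiltonian chain graph admits at least this many cycles, with equality forcing the structure of $G_n$.

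\emph{Step 1: Verify $G_n$ is Hamiltonian and count its cycles.} First I would decode the sequence \eqref{minHC}: it reads $(0^2 1)(01)(01)\cdots(01)(01^2)$, so $t_1=2$, $s_1=1$, then $t_i=s_i=1$ for the middle blocks, and $s_h=2$. By Corollary~\ref{hcg} one checks $T=S=h$, $t_1=2\geq s_1+1=2$, and $s_h=2\geq t_h+1=2$, and then the inequalities $\sum_{i=j}^h s_i\geq \sum_{i=j}^h t_i+1$ for $j=2,\ldots,h$; since for $2\le j\le h-1$ both sides count the same middle blocks plus the terminal adjustment, these hold with equality-plus-one, confirming Hamiltonicity. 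To count the Hamilton cycles I would exploit the key edges from Lemma~\ref{key}: in $G_n$ the structure is a ``near-path'' of nested neighborhoods, and the combinatorial freedom in forming a Hamilton cycle should reduce to an independent binary choice at each of $h-2$ internal cells (each cell $U_i,V_i$ with $t_i=s_i=1$ offering two ways to route the cycle through it), while the two end cells are forced. This yields exactly $2^{h-2}$, and I would make the bijection between Hamilton cycles and binary strings of length $h-2$ explicit.

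\emph{Step 2: The lower bound.} For an arbitrary Hamiltonian chain graph $G$ of order $2h$, I would argue that each Hamilton cycle can be encoded by choices at the cells, and that the minimum number of such encodings is $2^{h-2}$. The cleanest route is likely an inductive or counting argument: using Lemma~\ref{key}, every key edge lies in some Hamilton cycle, and the cycle-switching operation in the proof of Lemma~\ref{key} (swap $us,vt$ for $uv,st$) generates new Hamilton cycles. I would show this switching acts freely enough at the $h-2$ interior cells to produce $2^{h-2}$ distinct cycles, by tracking how an independent toggle at each interior cell preserves the Hamilton property and yields a distinct cycle. An alternative is induction on $h$: deleting an end cell reduces to a Hamiltonian chain graph of order $2(h-1)$, multiplying the count by a factor of at least $2$; the base case $h=2$ (where $2^0=1$) is a $4$-cycle with a unique Hamilton cycle.

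\emph{Step 3: Uniqueness of the extremal graph.} Here equality analysis is the crux. I expect the main obstacle to be showing that any deviation from the structure of $G_n$ strictly increases the cycle count: enlarging any $t_i$ or $s_i$ beyond the minimal values compatible with Corollary~\ref{hcg}, or having $h$ distributed differently, should create additional routing choices and hence strictly more than $2^{h-2}$ cycles. I would handle this by showing that whenever some interior cell has $t_i\geq 2$ or $s_i\geq 2$, or whenever the inequalities in Corollary~\ref{hcg} are strict rather than tight, extra Hamilton cycles appear via the switching operation, forcing strict inequality. The delicate point is confirming that the minimal Hamiltonicity constraints ($t_1\geq s_1+1$, $s_h\geq t_h+1$, and the interior chain of inequalities) are \emph{exactly} saturated by \eqref{minHC} and by no other sequence, so that $G_n$ is the unique graph simultaneously satisfying Hamiltonicity and admitting no surplus routing freedom.
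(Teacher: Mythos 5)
Your Step 1 is essentially the paper's counting argument: the paper also computes $c_{2h}=2^{h-2}$ by induction with base $G_4=C_4$, $c_4=1$, using the fact that in $G_{2(h+1)}$ the cell $V_1=\{z\}$ is a degree-two vertex, so the path $xzy$ (with $U_1=\{x,y\}$) is forced in every Hamilton cycle and the reduction to $G_{2h}$ carries a factor of exactly $2$. (A bookkeeping slip in your sketch: $G_n$ has $h-1$ cells, so there are only $h-3$ interior cells with $t_i=s_i=1$; the exponent $h-2$ comes from the $h-2$ inductive reduction steps, not from interior cells, so your proposed bijection with binary strings of length $h-2$ indexed by interior cells would undercount by a factor of $2$ unless one end cell also contributes a choice.)

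The genuine gap is in Steps 2 and 3. The paper never proves a direct lower bound of $2^{h-2}$ for an arbitrary Hamiltonian chain graph; instead it reduces extremality to parameter-minimality via an edge-deletion monotonicity argument resting on two lemmas you do not use: (i) deleting a \emph{key edge} of a chain graph yields again a chain graph, and (ii) Lemma~\ref{key}, that every key edge of a Hamiltonian chain graph lies on at least one Hamilton cycle, so its deletion strictly decreases the number of Hamilton cycles. Hence any Hamiltonian chain graph whose parameters exceed the minimal values permitted by Corollary~\ref{hcg} (namely $t_1=s_{h-1}=2$ and all other $t_i=s_j=1$, i.e., the sequence \eqref{minHC}) cannot be the minimizer, and uniqueness falls out of the same descent. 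Your proposed direct route --- ``switching acts freely enough at the interior cells to produce $2^{h-2}$ distinct cycles'' --- fails as stated, because a Hamiltonian chain graph of order $2h$ need not have $h-1$ cells at all: $K_{h,h}$, generated by $(0^h1^h)$, has a single cell and no interior cells, so there are no toggles to exploit; its abundance of Hamilton cycles arises from permutation freedom inside large cells, which a per-cell binary encoding does not capture. Likewise your alternative induction (``deleting an end cell reduces to order $2(h-1)$, multiplying the count by at least $2$'') is valid only for $G_n$, where $V_1$ is a single vertex of degree two forcing the path $xzy$; for a general graph the end cells can be large, the deletion need not preserve Hamiltonicity, and no factor-of-two bound is justified. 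Step 3 correctly anticipates that strict inequalities or enlarged cells create surplus cycles and that \eqref{minHC} uniquely saturates the constraints of Corollary~\ref{hcg}, but without the key-edge deletion lemmas this remains a statement of intent rather than a proof.
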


\begin{proof}
If $G$ is Hamiltonian chain graph, then $G$ has colour classes of the same order, say $h$ (see \cite{bip}). If $G$ is generated by $0^{t_1}1^{s_1}\cdots 0^{t_k}1^{s_k}$, then $t_1\neq 1$ and $s_k\neq 1$. The chain graph with minimum number of Hamilton cycles is defined with minimum values of $t_i$'s, $s_i$'s that, according to Corollary \ref{hcg}, are $t_1=s_k=2$, $t_i=1, i\neq 1$, $s_j=1, j\neq k$. The graph under consideration, i.e., the graph generated by \eqref{minHC}  is Hamiltonian, which is an easy exercise to prove. If any of $t_i$'s, $s_i$'s  takes a greater value than the given one, then by deleting any key edge (which by Lemma~\ref{key} belongs to at least one Hamilton cycle), we would obtain a graph that, in case that it is Hamiltonian, would have fewer number of Hamilton cycles (as the deletion of an edge cannot increase the number of  Hamilton cycles).

It remains to compute the number of Hamilton cycles, say $c_{2h}$, which can be performed by induction on $h$. If $h=2$, then 
$G_4=C_4$ and $c_4=2^{2-2}=1$.

Let $n=2(h+1)$. If $U_1=\{x,y\}$ and $V_1=\{z\}$, then by Lemma~\ref{key}, neither $G_{2(h+1)}- xz$ nor $G_{2(h+1)}-yz$ is Hamiltonian. Thus the path $xzy$ must lie in every Hamilton  cycle of $G_{2(h+1)}$ and so every Hamilton cycle of $G_{2(h+1)}$ must go through $z$. And from $z$ it may continue either through $x$ or $y$. Assume, without loss of generality, that $z$ is followed by $x$. The remaining part of the Hamilton cycle must continue through a vertex $a\notin\{ x,y,z\}$ and before it returns to $z$ it should go through $y$. Since $G_{2(h+1)}\setminus \{xz\}$ is isomorphic to $G_{2h}$, together with $2$ possible choices starting from $z$ we obtain $c_{2(h+1)}=2c_{2h}=2^{h-1}$. This completes the proof. 
\end{proof}

\section{Conclusions and future work}\label{con}

\noindent It well known that the problem of deciding whether a graph is Hamiltonian is NP-complete \cite[Chapter~8]{Sch}. In this paper we showed that for some graphs with a particular structure,  such are threshold and chain graphs, the same decision can be obtained by employing very fast algorithms. 
We also determined the minimum number of Hamilton cycles in Hamiltonian chain graphs. 

Presented results can be extended in several directions. First, by considering more general classes of graphs. A natural step after threshold graphs are the so-called cographs. By definition a graph is a cograph if it does not contain the path $P_4$ as an induced subgraph. Evidently, every threshold graph is a cograph. It is known that every cograph can be obtained by the iterative procedure based on the fact that if two graphs are cographs then their disjoint union and their join are cographs, as well \cite{StaM}. Such a procedure can be seen as an extension of the  procedure that generates threshold graphs (defined in the opening section and frequently used in this paper).

Next, one may consider the constructions of algorithms that would determine some other structural properties of threshold, chain or some similar graphs. The question of the minimum number of Hamilton cycles in a cograph is also an open problem.

Finally, our algorithms presented in Section~\ref{alg} work in general cases and both employ checking of a sequence of inequalities. It would be interesting to see under which conditions some of these inequalities can be avoided, i.e., what is the structure of the corresponding threshold or chain graph for which this part of the corresponding algorithm can be simplified.

\section*{Acknowledgement}\noindent 
We would like to thank anonymous referees for their careful reading.  Their  suggestions and observations improved the  content of the paper.

\end{document}